\def\Pr{{\mathbb P}}
\def\R{{\mathbb R}}
\def\Rp{{\mathbb R_+^2}}
\def\eps{{\varepsilon}}
\def\ind{{\mathbf 1}}
\def\Poiss{\operatorname{Poiss}}
\newtheorem{lem}{Lemma}
\newtheorem{conj}[lem]{Conjecture}
\newtheorem{thm}[lem]{Theorem}
\newtheorem{cor}[lem]{Corollary}
\newtheorem{rem}[lem]{Remark}
\newtheorem{defi}[lem]{Definition}
\title{Isoperimetric problem for  exponential measure \\ on the plane with $\ell_1$-metric
}
\author{Marta Strzelecka}
\date{10.06.2016}
\begin{document}

\maketitle

\begin{abstract}
	We give a solution to the isoperimetric problem for the exponential measure   on the plane with the $\ell_1$-metric. As it turns out, among all sets of a given measure, the simplex or its complement (i.e. the ball in the $\ell_1$-metric or its complement) has the smallest boundary measure. The proof is based on a symmetrisation (along the sections of equal $\ell_1$-distance from the origin).
\end{abstract}

\section{Introduction and main result}

	For  a metric space $(X,d)$ equipped with a Borel  measure $\mu$ we define the boundary measure $\mu^+$  of a Borel set $A$ as
	\[
		\mu^+(A):= \liminf_{h\to 0+} \frac{\mu(A^h)-\mu(A)}{h},
	\]
where $A^h:= \{x\in X: \ \exists y\in A \ \  d(x,y)<h \}$ is an $h$-neighbourhood of $A$ with respect to $d$. It is interesting to study the isoperimetric problem: among all sets of a given measure  find a set with the smallest boundary measure. In other words, we want to find a set which measure grows the slowest among all sets of a given measure. Such a set is said to be  \emph {extremal}.

	This problem seems to be  difficult in general and the solution to it is known  only in a few cases. 
	If $\mu $ is the Lebesgue measure in the $n$-dimensional Euclidean space, then  balls are extremal sets. This follows for example by the Brunn-Minkowski inequality and can be proven in many other ways (see for example \cite[Section 2]{Os}).  L{\'e}vy \cite{Lev} and Schmidt \cite{Sch} proved that the extremal sets  with respect to the Haar measure   on the $n$-dimensional sphere equipped with the geodesic metric are  balls in the geodesic metric, i.e. the intersections of half-spaces  in $\R^{n+1}$ with the sphere. 
	
	Another example of the full solution to the isoperimetric problem is the Gaussian measure in the Euclidean space $\R^n$, i.e. the product measure with the density $(2\pi)^{-n/2} e^{-|x|^2/2}$, where $|\cdot|$ is the Euclidean norm in $\R^n$. Borell \cite{Bo} and Sudakov with Tsirelson \cite{ST} proved that in this case half-spaces $\{x: \langle x,u \rangle \ge \lambda \}$ are extremal. As Bobkov and Houdr{\'e} proved in \cite{BH}, on the real line this result can be generalized into the case of an arbitrary symmetric log-concave measure.  Bobkov \cite{Bo} also studied  the isoperimetric problem in the product  metric space $(X^n, d_{\sup})$ equipped with a  product probability measure, where $d_{\sup}(x,y):=\sup_{i\le n}d(x_i, y_i)$. In this case, if the extremal sets in $X^2$ are of the form $A\times X$ and $A$ are extremal in $X$, then $A\times X^{n-1}$ are extremal in $X^n$.
	
	The discrete version of the isoperimetric problem on the cube $\{ -1,1\}^n$ (with the uniform measure and the Hamming distance) was considered by Harper in \cite{Har}. Roughly speaking, he showed that balls in the Hamming distance are extremal.  This was generalized to sets $\{ 0,1, \ldots, d-1\}^n$ (instead of $\{ -1,1\}^n$) by Wang and Wang in \cite{WW}.

	It should be noted, that once we know the solution to the isoperimetric problem, we can obtain  concentration properties for the measure $\mu$ (see for example Chapter 2.1 of \cite{Led}). However, it is probably the most difficult way to derive concentration inequalities, since it relies on  finding the exact value of the isoperimetric function (and the sets which achieve the smallest boundary measure), not only a reasonable estimate on it.
	
	In this note we will find the extremal sets in the case of the exponential measure on the plane with $\ell_1$-metric. Let $\nu$ be the product exponential measure on $\R_+^n  = [0,\infty)^n$, i.e. the measure with the density
	\[
  		e^{-\sum_{i=1}^n x_i}\ind_{x\in \R_+^n},
	\]
 and let  $B_1^n$ be the unit ball  in  the $\ell_1$-distance (centred at the origin). 
 
 \begin{defi}\label{def_B_A}
 	For a Borel set $A\subset \R$ we define the set $B_A$ by a formula
 	\begin{displaymath}
		B_A := \left\{ \begin{array}{ll}
		tB_1^n & \textrm{ if \ } \nu(A)\ge \frac 12\\
		\R_+^n\setminus tB_1^n & \textrm{ if \ }\nu(A) <\frac 12
\end{array} \right. ,
\end{displaymath}
 	where $t$ is the unique positive number for which $\nu(B_A)=\nu(A)$. We call such a number $t$ the radius of $B_A$.
 \end{defi}

	In other words $B_A$ is a simplex or a complement of a simplex, and has the same measure as $A$.  As  will be clear from  Lemma \ref{lem_trap} below, out of these two sets we pick   the one of smaller boundary measure.

Our main result is the following theorem, which  states that among all Borel sets of a given measure, a simplex or its complement has the smallest boundary measure. Unfortunately, we are able to give the complete proof only in the case $n=2$, but a part of our reasoning works also for a general $n$. 

\begin{thm} \label{main_theorem}
	If $A$ is a Borel set in $\R_+^2$, then 
	\begin{equation}\label{isoper}
		\nu^+(A)\ge \nu^+(B_A).
	\end{equation}
\end{thm}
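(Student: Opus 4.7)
My plan is a slice-wise symmetrisation along the level lines of the $\ell_1$-distance to the origin, followed by a one-dimensional rearrangement. Since the density of $\nu$ depends only on $s := x_1+x_2$, it is constant along each diagonal segment $L_s := \{x\in\R_+^2 : x_1+x_2=s\}$. I would encode $A$ by its profile $\ell_A(s) := |E_A(s)|$, where $E_A(s) := \{u\in[0,s] : (u, s-u)\in A\}$ is the trace in the $u = x_1$ coordinate; Fubini gives $\nu(A) = \int_0^\infty e^{-s}\ell_A(s)\,ds$. Define the \emph{triangular symmetrisation}
\[
A^\triangle \; := \; \{(x_1,x_2)\in\R_+^2 : x_2 \le \ell_A(x_1+x_2)\},
\]
so that $A^\triangle \cap L_s$ has $u$-trace $[s-\ell_A(s), s]$, a single sub-segment anchored at the vertex $(s,0)$. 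By construction $\nu(A^\triangle) = \nu(A)$. The proof then reduces to two steps: (i)~$\nu^+(A^\triangle) \le \nu^+(A)$, and (ii)~among triangular sets of the form $A^\triangle$ the minimum of $\nu^+$ is attained by $B_A$.

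The key geometric fact for (i) is that for any $a \in L_{s'}$ and $|s - s'| < h$, the intersection $B_{\ell_1}(a, h) \cap L_s$ has $u$-trace an open interval of length exactly $h$ centred at $a_1 + (s-s')/2$; otherwise the intersection is empty. Consequently
\[
E_{A^h}(s) \; = \; \bigcup_{|\delta|<h}\bigl((E_A(s-\delta) + \tfrac\delta 2) + (-\tfrac h2, \tfrac h2)\bigr) \cap [0, s].
\]
Assuming, by polygonal approximation, that $E_A(s) = \bigcup_i [a_i(s), b_i(s)]$ is a finite disjoint union of intervals, a first-order expansion yields
\[
|E_{A^h}(s)| - |E_A(s)| \; = \; h\sum_i\bigl[\max(|1-a_i'|,|a_i'|) + \max(|1-b_i'|,|b_i'|)\bigr] + o(h),
\]
with endpoints equal to $0$ or $s$ contributing $0$ due to truncation to $[0,s]$. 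Using the identity $\max(|1-x|,|x|) = |x-\tfrac12|+\tfrac12$ together with the triangle inequality applied to $\ell_A'(s) = \sum_i(b_i'(s)-a_i'(s))$, this endpoint-sum (with $k \ge 1$ components) is bounded below by $\max(|1-\ell_A'(s)|,|\ell_A'(s)|)$, the analogous rate for the single-component anchored configuration $A^\triangle$. An analogous comparison at jumps of $\ell_A$, combined with integration against $e^{-s}\,ds$ and the limit $h \to 0^+$, yields $\nu^+(A^\triangle)\le \nu^+(A)$ for polygonal $A$; a careful approximation argument transports this to arbitrary Borel $A$.

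For (ii), setting $f := \ell_A$, the same first-order analysis specialised to $A^\triangle$ gives
\[
\nu^+(A^\triangle) \; = \; \int_{\{0 < f(s) < s\}} \max(|1 - f'(s)|, |f'(s)|)\, e^{-s}\, ds + \sum_{s_0 \in J(f)} |\Delta f(s_0)|\, e^{-s_0},
\]
where $J(f)$ is the jump set of $f$. The integrand is $\ge \tfrac12$ on the slanted set $\{0 < f < s\}$, and a rearrangement argument (moving mass across the linear constraints $0 \le f \le s$ to concentrate cost at a minimal number of jumps while preserving $\int f e^{-s}\,ds$) shows that the minimising $f$ may be taken bang-bang, i.e.\ $f = s\,\ind_T$ for some measurable $T \subseteq [0,\infty)$. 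For such $f$ the first term vanishes and $\nu^+(A^\triangle) = \sum_{s_0 \in \partial T} s_0\, e^{-s_0}$, which under $\int_T s\, e^{-s}\, ds = \nu(A)$ is minimised by the single-interval choices $T = [0,t]$ (giving $B_t$) or $T = [t,\infty)$ (giving $\R_+^2 \setminus B_t$); Lemma~\ref{lem_trap} then selects the smaller-boundary one as $B_A$. The main obstacle is step~(i): both the algebraic bound on the endpoint-sum (which needs careful bookkeeping for boundary-touching components) and the passage from polygonal $A$ to general Borel $A$ — the latter requiring an approximation together with a suitable lower-semicontinuity property of $\nu^+$ along the approximating scheme.
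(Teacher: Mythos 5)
Your symmetrisation $A^\triangle$ is exactly the paper's $C_A$ (Lemma \ref{lemat_red_1}), so the first move coincides; but both of your two steps have gaps, and the second one is fatal as written. For step (i), the paper does not argue infinitesimally: it proves the set-level inequality $\mu(C_A+hT)\le\mu(A+hT)$ for \emph{every} $h>0$ by a direct inclusion/contradiction argument on the slices, which needs no polygonal structure, no differentiable endpoints, and no jump bookkeeping; the price of your infinitesimal, polygonal version is precisely the ``careful approximation argument'' you defer. That reduction is not a routine lower-semicontinuity fact: in the paper it occupies the first lemma and Corollary \ref{cor_fin_balls} (passing to the integrated form $\varphi^{-1}(\nu(A^h))\ge\varphi^{-1}(\nu(A))+h$ for finite unions of balls, which is stable under countable unions), plus Lemma \ref{simplification_lemma} and Remark \ref{rem_T}. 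Since $\nu^+$ is a liminf and is badly behaved under approximation, you would have to reproduce some version of that machinery; merely proving $\nu^+(A)\ge\nu^+(B_A)$ for polygonal $A$ does not transfer to Borel $A$ without it.

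The decisive gap is step (ii). The assertion that a ``rearrangement argument (moving mass across the linear constraints $0\le f\le s$ \dots)'' shows the minimising profile may be taken bang-bang is exactly the hard core of the theorem restricted to triangular sets, and you give no actual argument for it: the weight $e^{-s}$, the constraint $0\le f(s)\le s$, and the jump terms interact nontrivially, and no standard rearrangement applies (note the functional is already convex in $f'$, so convexity alone cannot force $f\in\{0,s\}$; the bang-bang property is equivalent to the statement being proved). The paper's corresponding step is the entire proof of Theorem \ref{main_theorem}: it first reduces to \emph{connected} compact sets (a reduction you never make, and which requires Lemma \ref{lem_comp} to recombine components), then locates the first level $u$ with a full section $f(u)=\sqrt2\,u$ (treating separately the case where no such $u$ exists, via a shift by $-he_1$ against the complement of a ball), and compares the parts below and above $u$ with trapezoids $[a,u]$ and $[u,b]$ by translation estimates, the $R_\lambda$ measure bookkeeping, and an $Lh^2$ error, before invoking Lemma \ref{lem_trap}. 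Even granting your bang-bang claim, passing from a general union of intervals $T$ to a single interval needs the subadditivity of Lemma \ref{lem_comp} (or an equivalent), not Lemma \ref{lem_trap} alone. As it stands, the proposal replaces the paper's hardest argument by an unproven claim, so it does not constitute a proof.
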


We call a Borel subset $A \subset\R^n$ $1$-unconditional if $x\in A$ implies that $(\eps_1 x_1, \ldots, \eps_n x_n) \in A$ for every choice of signs $(\eps_i)_{i=1}^n \in \{-1,1 \}^n$. Note that if $A$ is such a set and $x\in A^h\cap \R_+^n$, then there exists $y\in A\cap \R_+^n$ such that $\|x-y\|_1< h$. This together with the previous theorem implies the following isoperimetric inequality.

\begin{cor}
	Let $\mu$ be the symmetric exponential measure on the plane, i.e. the measure with  density $\frac{1}4 e^{-|x|-|y|}$. Then, among all $1$-unconditional Borel sets $A$,   a ball or its complement has the smallest boundary measure.
\end{cor}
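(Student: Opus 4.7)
The plan is to reduce the problem on the whole plane (with measure $\mu$) to the already-solved problem on $\R_+^2$ (with measure $\nu$), exploiting the $1$-unconditionality to fold the four quadrants together. For a $1$-unconditional Borel set $A\subset\R^2$, write $A_+ := A\cap\R_+^2$. Since the density $\tfrac14 e^{-|x|-|y|}$ is unconditional and $A$ is the union of the four reflected copies of $A_+$ (overlapping only on the axes, which have $\mu$-measure zero), a change of variables gives
\[
  \mu(A)=4\cdot \tfrac14\int_{A_+}e^{-x-y}\,dx\,dy=\nu(A_+).
\]

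Next, I would translate the boundary measure. Since the $\ell_1$-distance is itself $1$-unconditional, the open $h$-neighborhood $A^h$ is again $1$-unconditional, so the identity above yields $\mu(A^h)=\nu(A^h\cap\R_+^2)$. Now I use exactly the observation made in the excerpt just before the corollary: for $x\in A^h\cap\R_+^2$ and any witnessing $z\in A$ with $\|x-z\|_1<h$, the point $y:=(|z_1|,|z_2|)$ lies in $A$ by unconditionality, lies in $\R_+^2$, and satisfies $\|x-y\|_1\le\|x-z\|_1<h$ because $x\in\R_+^2$. Therefore
\[
   A^h\cap\R_+^2 \;=\; (A_+)^h_{\R_+^2},
\]
where the right-hand side denotes the $h$-neighborhood taken in the metric space $(\R_+^2,\ell_1)$; the reverse inclusion is immediate since $A_+\subset A$. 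Combining with the measure identity and applying it to both $A$ and $A^h$, I get the key equalities $\mu(A)=\nu(A_+)$ and $\mu(A^h)-\mu(A)=\nu((A_+)^h_{\R_+^2})-\nu(A_+)$, hence $\mu^+(A)=\nu^+(A_+)$.

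To conclude, I apply Theorem \ref{main_theorem} to $A_+\subset\R_+^2$ to obtain $\nu^+(A_+)\ge\nu^+(B_{A_+})$. Now $B_{A_+}$ is either $tB_1^2\cap\R_+^2$ (a simplex) or its complement in $\R_+^2$. Its $1$-unconditional extension $C:=\bigcup_{\eps\in\{-1,1\}^2}\eps B_{A_+}$ is, respectively, the closed $\ell_1$-ball $tB_1^2\subset\R^2$ or the complement of the open $\ell_1$-ball; in both cases $C$ is a ball or its complement in the sense of the corollary. Applying the two identities from the previous paragraph to the $1$-unconditional set $C$ gives $\mu(C)=\nu(B_{A_+})=\nu(A_+)=\mu(A)$ and $\mu^+(C)=\nu^+(B_{A_+})$, so
\[
   \mu^+(A)=\nu^+(A_+)\ge\nu^+(B_{A_+})=\mu^+(C),
\]
which is the claim. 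The main technical step is the inclusion $A^h\cap\R_+^2\subseteq(A_+)^h$, but this is essentially already done by the remark quoted in the excerpt; everything else is bookkeeping around the factor $\tfrac14$ and the four-fold symmetry.
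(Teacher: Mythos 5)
Your proposal is correct and follows essentially the same route the paper intends: it fleshes out the one-line observation preceding the corollary (that unconditionality lets one replace a witness $z\in A$ by $(|z_1|,|z_2|)\in A\cap\R_+^2$ without increasing the $\ell_1$-distance from a point of $\R_+^2$), folds the four quadrants to get $\mu(A)=\nu(A_+)$ and $\mu^+(A)=\nu^+(A_+)$, and then invokes Theorem \ref{main_theorem}. The bookkeeping with the unconditional extension of $B_{A_+}$ is accurate, so no gaps to report.
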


However, the balls are not extremal sets for the symmetric exponential measure on the plane. An example is the set $A:= \{x+y\le 3 \}$, which boundary measure is smaller than the boundary measure of the simplex of the same measure.

We believe that  Theorem \ref{main_theorem} holds also in higher dimensions.

\begin{conj}
	If $A$ is a Borel set in $\R_+^n$, then 
	\begin{equation*}
		\nu^+(A)\ge \nu^+(B_A).
	\end{equation*}
\end{conj}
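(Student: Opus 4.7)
The natural strategy is induction on the dimension $n$, with the base case $n=2$ supplied by Theorem~\ref{main_theorem}. Fix $n\ge 3$ and assume the conjecture for all lower dimensions. The plan is to mirror the symmetrisation approach described in the abstract: parametrise $\R_+^n$ by the level surfaces $S_t:=\{x\in\R_+^n:\|x\|_1=t\}$ (which are $(n-1)$-dimensional simplices with vertices $te_1,\ldots,te_n$) and by the co-area formula write $\nu(A)=\frac{1}{\sqrt{n}}\int_0^\infty e^{-t}\mathcal{H}^{n-1}(A\cap S_t)\,dt$. The candidate extremal set $B_A$ is characterised by $B_A\cap S_t=S_t$ for $t$ below a critical radius and $B_A\cap S_t=\emptyset$ otherwise (with the roles reversed in the complementary case), so one expects any slice-wise rearrangement that monotonises the function $t\mapsto \mathcal{H}^{n-1}(A\cap S_t)$ to be beneficial.

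The first step I would carry out is a slice-wise symmetrisation: for each $t$ replace $A\cap S_t$ by a subset of $S_t$ of the same $(n-1)$-dimensional Hausdorff measure, chosen to be a sub-simplex anchored at a fixed vertex of $S_t$, say at $te_n$. This symmetrisation reduces each slice to a set of the form $\{x\in S_t:x_1+\cdots+x_{n-1}\le r(t)\}$ for some $r(t)\le t$, so the resulting set in $\R_+^n$ takes the form $\{x\in\R_+^n:x_1+\cdots+x_{n-1}\le r(\|x\|_1)\}$. One must verify two things: that this rearrangement preserves $\nu$-measure (immediate from the co-area formula), and that it does not increase $\nu^+$ (the technical heart of the argument).

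The second step is a one-variable optimisation. Among all profiles $r:[0,\infty)\to[0,\infty)$ with $r(t)\le t$ and with prescribed $\nu$-measure, one must show that $\nu^+$ is minimised by $r\equiv 0$ past some threshold (giving a simplex) or by $r\equiv t$ up to some threshold and $r\equiv 0$ afterwards (giving the complement of a simplex). This would reduce to a one-dimensional isoperimetric-type inequality for a measure on $[0,\infty)$, which should be tractable by a direct rearrangement argument combined with the inductive hypothesis applied to each slice $A_s:=\{y\in\R_+^{n-1}:(y,s)\in A\}$ to control the cross-sectional contributions.

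The main obstacle is step~one: showing that symmetrisation along the level surfaces $S_t$ does not increase the $\ell_1$-boundary measure. Unlike the Euclidean or $\ell_\infty$-product settings, the $\ell_1$-distance couples neighbouring level surfaces in a delicate way, since a point in $A^h\cap S_{t+\delta}$ may come from $A\cap S_t$ for any $t$ with $|\delta|<h$ by moving partly along the slice and partly transversely. The $n=2$ case presumably handles this via the trapezoidal structure behind \emph{lem\_trap}; extending such a local estimate to sub-simplices of $(n-1)$-simplices is the step I expect to be hardest, and would likely require a careful analysis of how the $\ell_1$-neighbourhood of a sub-simplex of $S_t$ compares with that of an arbitrary set of the same $\mathcal{H}^{n-1}$-measure inside $S_t$. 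An alternative I would attempt in parallel is coordinate-hyperplane slicing ($A_s$ instead of $A\cap S_t$), reducing by induction in $\R_+^{n-1}$; but coupling the $\ell_1$-boundary across different $s$-slices reintroduces the same core difficulty.
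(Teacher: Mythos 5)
The statement you are addressing is stated in the paper as a \emph{conjecture}: the paper proves the result only for $n=2$ and offers no proof in higher dimensions, so there is no argument of the paper to match. Your text is not a proof either --- it is a plan in which the step you yourself call ``the technical heart'' (that the slice-wise symmetrisation along the level sets $S_t$ does not increase $\nu^+$) is left unverified. This is a genuine gap, and it is precisely the step that the paper's own discussion indicates cannot be carried out in the naive way for $n>2$. After Lemma \ref{lemat_red_1} the author remarks that the key comparison \eqref{pom_5} holds for $s\ge t$ (with an extra use of Brunn--Minkowski), but \emph{fails in general} for $s<t$ when $n>2$: the argument for $n=2$ hinges on $S_t$ being an interval, so that the slices of the symmetrised set and of the relevant comparison set are both intervals anchored at an endpoint of $S_t$; for $n\ge 3$ a sub-simplex of $S_t$ anchored at a vertex does not enjoy this compatibility, and the contribution to lower slices ($s<t$) can genuinely decrease under your rearrangement of $A\cap S_t$, i.e.\ the symmetrised set can have smaller lower-slice neighbourhoods than $A$, breaking the monotonicity of $\nu^+$ you need. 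So the obstacle you flag is not merely ``the hardest step'' --- as stated, with the anchored-sub-simplex rearrangement, it is false, and any proof of the conjecture must replace or substantially modify this symmetrisation.

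Your fallback routes also run into difficulties the paper already identifies. Induction via coordinate-hyperplane slices $A_s=\{y\in\R_+^{n-1}:(y,s)\in A\}$ does not work as an induction on $n$ because a slice of a connected set need not be connected (the reduction to connected compact sets, Lemma \ref{simplification_lemma}, is essential to the $n=2$ proof of \eqref{pom_6} and \eqref{pom_7}), and, as you note, the $\ell_1$-coupling between different slices reintroduces the same problem. Finally, even granting step one, your ``one-variable optimisation'' over profiles $r(t)$ is not a routine reduction: the $n=2$ proof does not optimise over all profiles but exploits the existence of a level $u$ with $f(u)=\sqrt{2}u$ together with the trapezoid lemma (Lemma \ref{lem_trap}), and the author explicitly states that this final reasoning also fails in higher dimensions. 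In short: the proposal reproduces the natural extension of the paper's $n=2$ strategy, but the two steps left open are exactly the ones the paper says break down for $n\ge 3$, so no proof has been given.
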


	The organization of this paper is the following. First   we prove that among all \emph{trapezoids} (i.e. the sets of the form $RB_1^n \setminus rB_1^n$ for $0\le r<R\le \infty$) of a given measure the simplex $tB_1^n$ or its complement  has the smallest boundary measure (see Lemma \ref{lem_trap}). Then we show that in order to prove the isomerimetric inequality \eqref{isoper}, it suffices to consider connected compact sets of non-empty interior. After that we restrict our attention to the case $n=2$.  We do  symmetrisations, which   lead us  from a given connected compact set $A$ to a  \emph{trapezoid} of the same measure (see Lemma \ref{lemat_red_1} and proof of Theorem \ref{main_theorem}).

	\section{Proof of Theorem \ref{main_theorem}}
	
	Note that $\nu^+(B_A)$ depends on $\nu (A)$ (or equivalently on $\nu(B_A)$) in a smooth way, excluded the case $\nu(A)=\frac{1}2$. Indeed, one can  calculate that
	\begin{equation*}
		\nu(tB_1^n) = \nu(\R_+^n \cap tB_1^n) = C_n \int_0^t e^{-x}x^{n-1}dx
	\end{equation*}
	and therefore
	\begin{equation*}
		\nu^+(tB_1^n) = \nu^+(\R_+^n \cap tB_1^n) = C_ne^{-t}t^{n-1} = \nu^+(\R_+^n \setminus tB_1^n) = \nu^+((tB_1^n)^c),
	\end{equation*}
	where
	\begin{equation*}
		C_n= \left(\int_0^\infty e^{-x}x^{n-1}dx\right)^{-1} = \frac{1}{(n-1)!}.
	\end{equation*}
	Moreover, if $A$ is a finite sum of $\ell_1$-balls, we can write the true limit in the definition of $\mu^+(A)$. We use these facts to deduce the following technical lemma.
	
	\begin{lem}
	
	\begin{enumerate}	
	\item[(i)]
		Inequality \eqref{isoper} holds for every Borel set $A$ of measure at least $\frac 12$ if and only if for  every finite union $B$ of $\ell_1$-balls, such that $\nu(B)\ge \frac 12$,  we have
		\begin{equation}\label{isoper_neigh1}
			\varphi^{-1}\big(\nu(B^h)\big) \ge \varphi^{-1}\big(\nu(B)\big) +h, \qquad \mbox{for all } h>0,
		\end{equation}
		where $\varphi(t) := \nu(tB_1^n) =  C_n\int_0^t e^{-x}x^{n-1}dx$. Moreover,  inequalities \eqref{isoper} for finite unions of $\ell_1$-balls of measure at least $\frac 12$ and \eqref{isoper_neigh1} for finite unions of $\ell_1$-balls of measure at least $\frac 12$   are equivalent.

	\item[(ii)]
		Inequality \eqref{isoper} holds for every Borel set $A$ of measure less than $\frac 12$ if and only if for  every finite union  $B $ of $\ell_1$-balls, such that $\nu(B^h)<\frac 12$,  we have
		\begin{equation}\label{isoper_neigh2}
			\psi^{-1}\big(\nu(B^h)\big) \le \psi^{-1}\big(\nu(B)\big) -h,  \qquad \mbox{for all } h>0,
		\end{equation}
		where $\psi(t) := \nu(\R^n\setminus tB_1^n) = C_n\int_t^{\infty} e^{-x}x^{n-1}dx$. Moreover, inequalities \eqref{isoper} for finite unions of $\ell_1$-balls of measure less than $\frac 12$ and \eqref{isoper_neigh1} for finite unions of $\ell_1$-balls of measure less than $\frac 12$   are equivalent.
		\end{enumerate}	
	\end{lem}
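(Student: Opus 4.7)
The plan is to rewrite both \eqref{isoper} and \eqref{isoper_neigh1} via the smooth, strictly increasing function $\varphi(t)=\nu(tB_1^n)$. For $\nu(A)\ge\tfrac12$, $B_A=tB_1^n$ with $t=\varphi^{-1}(\nu(A))$, so $\nu^+(B_A)=\varphi'(\varphi^{-1}(\nu(A)))$, and \eqref{isoper_neigh1} applied to $A$ is equivalent to
\[
\nu(A^h)\ge\varphi\bigl(\varphi^{-1}(\nu(A))+h\bigr),\qquad h>0.
\]
Dividing by $h$, subtracting $\nu(A)=\varphi(\varphi^{-1}(\nu(A)))$, and taking $\liminf_{h\to0^+}$ immediately gives \eqref{isoper} for $A$ from \eqref{isoper_neigh1} for $A$ by smoothness of $\varphi$. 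Thus the non-trivial content of the lemma is (a) the reverse implication restricted to finite unions of $\ell_1$-balls (which is the non-trivial half of the ``moreover'' equivalence), and (b) the passage from finite unions of $\ell_1$-balls to arbitrary Borel sets in the main equivalence.

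For (a), let $B=\bigcup_iB(x_i,r_i)$ with $\nu(B)\ge\tfrac12$. Then $B^h=\bigcup_iB(x_i,r_i+h)$ is of the same form, and inclusion--exclusion makes $h\mapsto\nu(B^h)$ piecewise smooth; at every $h\ge0$ the $\liminf$ in the definition of $\nu^+(B^h)$ is a genuine right-derivative. Applying \eqref{isoper} to $B^h$, which also has measure $\ge\tfrac12$, and using the chain rule for $\varphi^{-1}$,
\[
\frac{d^+}{dh}\bigl[\varphi^{-1}(\nu(B^h))-h\bigr]=\frac{\nu^+(B^h)}{\varphi'(\varphi^{-1}(\nu(B^h)))}-1\ge0.
\]
A continuous function with non-negative lower right derivative on $[0,\infty)$ is non-decreasing, so $\varphi^{-1}(\nu(B^h))-h\ge\varphi^{-1}(\nu(B))$, which is \eqref{isoper_neigh1}.

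For (b), let $A$ be Borel with $\nu(A)>\tfrac12$. By inner regularity of $\nu$, pick a compact $K\subseteq A$ with $\nu(K)>\tfrac12$, and for $\delta\in(0,h)$ pick a $\delta$-net $\{x_1,\dots,x_m\}\subseteq K$; set $B_\delta=\bigcup_{i=1}^mB(x_i,\delta)$. Then $K\subseteq B_\delta$, so $\nu(B_\delta)\ge\nu(K)>\tfrac12$, and $B_\delta^{h-\delta}=\bigcup_iB(x_i,h)\subseteq K^h\subseteq A^h$ because each $x_i\in K\subseteq A$. Applying \eqref{isoper_neigh1} to $B_\delta$ yields
\[
\nu(A^h)\ge\nu(B_\delta^{h-\delta})\ge\varphi\bigl(\varphi^{-1}(\nu(B_\delta))+h-\delta\bigr)\ge\varphi\bigl(\varphi^{-1}(\nu(K))+h-\delta\bigr).
\]
Letting $\delta\to0^+$ and then $\nu(K)\uparrow\nu(A)$ produces $\nu(A^h)\ge\varphi(\varphi^{-1}(\nu(A))+h)$, which by the first paragraph gives \eqref{isoper} for $A$. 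The marginal case $\nu(A)=\tfrac12$ is handled by continuity of $\varphi^{-1}$ at $\tfrac12$. The opposite direction of the main equivalence is trivial: \eqref{isoper} for all Borel sets in particular holds for finite unions of $\ell_1$-balls, and (a) then produces \eqref{isoper_neigh1} for such unions.

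Part (ii) is entirely analogous with $\psi(t)=\nu(\R_+^n\setminus tB_1^n)=1-\varphi(t)$ in place of $\varphi$: \eqref{isoper_neigh2} rewrites as $\nu(A^h)\ge\psi(\psi^{-1}(\nu(A))-h)$, and the three steps above go through with the obvious sign flips (since $\psi^{-1}$ is decreasing, $\nu(B_\delta)\ge\nu(K)$ yields $\psi^{-1}(\nu(B_\delta))\le\psi^{-1}(\nu(K))$, which preserves the direction of the bound). The main technical obstacle will be the approximation in (b): the map $A\mapsto\nu(A^h)$ has no useful continuity, so the inner finite union $B_\delta$ must be chosen to simultaneously satisfy $B_\delta^{h-\delta}\subseteq A^h$ and the measure condition imposed by the hypothesis ($\nu(B_\delta)\ge\tfrac12$ in part (i), $\nu(B_\delta^h)<\tfrac12$ in part (ii)), which also forces a careful treatment of the boundary measures.
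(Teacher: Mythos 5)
Your first direction (apply \eqref{isoper} to $B^h$, which is again a finite union of balls, to force the right derivative of $h\mapsto\varphi^{-1}(\nu(B^h))$ to be at least $1$, then integrate) is exactly the paper's argument, and your observation that \eqref{isoper_neigh1} for a given set yields \eqref{isoper} for that set by taking the $\liminf$ is also the same; this settles the ``moreover'' equivalence just as in the paper. Where you genuinely differ is the passage from finite unions of balls to arbitrary Borel sets: the paper fixes $r>0$, writes the open set $A^r$ as a countable union of balls, applies \eqref{isoper_neigh1} to the finite subunions $\bigcup\mathcal{U}_{r,m}$, lets $m\to\infty$, then $r\to0$, and finally $\eps\to0$; you instead take a compact $K\subset A$ by inner regularity and cover it by a finite union $B_\delta$ of $\delta$-balls centred in $K$, using $B_\delta^{h-\delta}\subset K^h\subset A^h$ and $\nu(B_\delta)\ge\nu(K)$. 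Both schemes are legitimate, and for $\nu(A)>\frac12$ your computation is correct.

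The gap is in the marginal cases, which your scheme does not handle and the paper's handles automatically. If $\nu(A)=\frac12$, there need not exist any compact $K\subset A$ with $\nu(K)\ge\frac12$ (take $A$ open of measure exactly $\frac12$), hence possibly no admissible $B_\delta$ at all: every inner approximation can have measure $<\frac12$, so the hypothesis of (i) cannot be invoked, and continuity of $\varphi^{-1}$ at $\frac12$ does not repair this, because the failure is in the hypothesis, not in a limit of valid inequalities. The fix is to enlarge first: $\nu(A^r)>\frac12$ strictly for $r>0$ (since $A^r=\overline{A}^r$, $\overline{A}$ is a nonempty proper closed subset of the connected set $\R_+^n$ unless the claim is trivial, and $\nu$ has full support), so you can run your argument inside $A^r$ with enlargement $h-r-\delta$ and let $r\to0$ --- which is in effect what the paper's decision to approximate $A^r$ rather than $A$ buys it. The analogous issue in (ii), where the hypothesis is restricted to unions $B$ with $\nu(B^h)<\frac12$ (a condition on the enlarged set), is flagged in your last sentence but left unresolved; there it is easy: if $\nu(A^h)\ge\frac12$ for every $h>0$ while $\nu(A)<\frac12$, then $\nu^+(A)=+\infty$ and \eqref{isoper} is trivial, and otherwise $B_\delta^{h-\delta}\subset A^h$ has measure $<\frac12$ for all small $h$, so the hypothesis applies. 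With these two points supplied your proof is complete.
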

	
	\begin{proof}
		We will only show (i), since the proof of (ii) is similar.
		
		Assume first that \eqref{isoper}  holds for Borel sets of measure at least $\frac 12$. To prove inequality \eqref{isoper_neigh1} let us introduce the function $h\mapsto \varphi^{-1}(\nu(B^h))$ and note that \eqref{isoper} for $B^h$ (which is also a finite union of balls) implies that the  derivative of this function is bounded from below by $1$. Note that we use \eqref{isoper} only for finite unions of balls, so  we also proved  the second part of (i).
		
		Now suppose that \eqref{isoper_neigh1} holds for finite unions of balls with measure at least $\frac 12$. It is obvious that \eqref{isoper_neigh1} for a set $A$ implies \eqref{isoper} for this $A$, so we only have to show  \eqref{isoper_neigh1} for the set $A$ instead of $B$. Note that for $r>0$ the set $A^r$ is open and therefore it can be represented as a countable union of balls $\bigcup \mathcal{U}_{r}$. Let $\mathcal{U}_{r,m}$ be a subfamily of the family  $\mathcal{U}_r$ containing the first $m$ balls (so that $\mathcal{U}_{r,m+1}\setminus \mathcal{U}_{r,m}$ contains a single ball). Then, by the continuity and the monotonicity of $\varphi$, and the inequality \eqref{isoper_neigh1} for $\bigcup\mathcal{U}_{r,m}$, we have
		\begin{multline*}
			\varphi^{-1}\big(\nu(A^{r+h})\big) \ge \varphi^{-1}\bigg(\nu\Big( \Big(\bigcup \mathcal{U}_{r,m} \Big)^{h}\Big)\bigg) \ge \varphi^{-1} \Big(  \nu\big(\bigcup \mathcal{U}_{r,m} \big) \Big) + h \mathop{\longrightarrow}^{m\to \infty} \varphi^{-1}\big(\nu(A^{r})\big) + h  
			\\
			 \ge  \varphi^{-1}\big(\nu({A})\big) + h
		\end{multline*}
		for sufficiently large $n$ (depending on $r>0$) . We take $r\to 0$ on the left-hand side of this estimate to get \eqref{isoper_neigh1} for $\overline{A^h}$. In particular, for any $h>\eps>0$ we have \eqref{isoper_neigh1} for $\overline{A^{h-\eps}}$, so $\varphi^{-1}\big(\nu(A^h)\big) \ge \varphi^{-1}\big(\nu(A)\big) +h-\eps$. We take $\eps \to 0$ to get \eqref{isoper_neigh1} for $A$.  This finishes the proof.
	\end{proof} 
	
	\begin{cor}\label{cor_fin_balls}
		It suffices to prove the isoperimetric inequality \eqref{isoper} for finite unions of $\ell_1$-balls.
	\end{cor}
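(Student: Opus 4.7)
The plan is to deduce the corollary as an immediate consequence of the preceding lemma, with essentially no extra work; the only decision is to split the argument into the two measure regimes treated separately in parts (i) and (ii).

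First I would fix a Borel set $A \subset \R_+^n$ and distinguish the cases $\nu(A)\ge \tfrac12$ and $\nu(A)<\tfrac12$. In the first case, suppose that \eqref{isoper} has been established for every finite union of $\ell_1$-balls; in particular it holds for every such union of measure at least $\tfrac12$. By the ``moreover'' clause of part (i) of the lemma, this is equivalent to \eqref{isoper_neigh1} holding for every finite union of $\ell_1$-balls of measure at least $\tfrac12$. But then the main equivalence in part (i) of the lemma promotes this to \eqref{isoper} for every Borel set of measure at least $\tfrac12$, which covers $A$ in this case.

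In the case $\nu(A)<\tfrac12$, I would argue symmetrically using part (ii) of the lemma: the assumed validity of \eqref{isoper} on finite unions of $\ell_1$-balls of measure less than $\tfrac12$ is equivalent (by the ``moreover'' clause of (ii)) to \eqref{isoper_neigh2} on the same class, which in turn (by the main equivalence of (ii)) upgrades to \eqref{isoper} for all Borel sets of measure less than $\tfrac12$, and in particular for $A$.

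Since the lemma has already done all the analytic work---approximating an arbitrary open neighbourhood $A^r$ by finite subunions of a countable ball decomposition, passing to the limit via the monotonicity and continuity of $\varphi$ and $\psi$, and then sending $r\to 0$ and $\eps\to 0$---there is no real obstacle left in the proof of the corollary. The only mildly subtle point is to notice that we must treat the two measure regimes with different inequalities (\eqref{isoper_neigh1} and \eqref{isoper_neigh2}, involving $\varphi$ and $\psi$ respectively), but this is precisely why the lemma was stated in two parts.
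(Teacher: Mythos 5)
Your proposal is correct and follows exactly the route the paper intends: the corollary is an immediate consequence of the preceding lemma, obtained by splitting into the regimes $\nu(A)\ge\tfrac12$ and $\nu(A)<\tfrac12$ and combining the ``moreover'' clauses with the main equivalences of parts (i) and (ii). The paper itself offers no further argument, so there is nothing to add.
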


	We start the main part of the proof of the isoperimetric inequality by showing that the simplex or its complement is the set growing most slowly among all \emph{trapezoids} of a given measure.
	
	\begin{lem}\label{lem_trap}
	The isoperimetric inequality \eqref{isoper} holds for sets $A$ of the form $\{x\in \R_+^n: a<\|x\|_1<b \}$, where $0\le a<b \le \infty$.
\end{lem}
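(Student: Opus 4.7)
The plan is to reduce \eqref{isoper} for trapezoids to a constrained one-variable minimisation, show that its minimum is attained at the boundary of the parameter interval, and then identify this boundary minimum with $\nu^+(B_A)$.

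First, for $A = \{x \in \R_+^n : a < \|x\|_1 < b\}$ with $0 \le a < b \le \infty$, a routine check shows $A^h = \{a - h < \|x\|_1 < b + h\} \cap \R_+^n$ for all small $h > 0$; then using $\nu(tB_1^n) = \varphi(t)$ from the paragraph preceding the lemma,
\[
\nu^+(A) = \varphi'(a) + \varphi'(b) = p(a) + p(b),
\]
where $p := \varphi'$, so $p(t) = C_n t^{n-1} e^{-t}$, with the conventions $p(0) := 0$ (matching the formula for $n \ge 2$) and $p(\infty) := 0$ to handle the degenerate endpoints.

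Next, set $c := \nu(A) = \varphi(b) - \varphi(a)$ and view $b$ as an implicit function $b(a)$ on $[0, \varphi^{-1}(1-c)]$. Differentiating the constraint gives $b'(a) = p(a)/p(b)$, and writing $f(a) := p(a) + p(b(a))$ and $\ell(t) := (\log p)'(t) = (n-1)/t - 1$, a short calculation (using $p' = p\,\ell$) collapses the derivative to
\[
f'(a) = p(a)\bigl(\ell(a) + \ell(b(a))\bigr).
\]
For $n \ge 2$, $\ell$ is strictly decreasing on $(0, \infty)$ and $a \mapsto b(a)$ is strictly increasing, so the factor $\ell(a) + \ell(b(a))$ is itself strictly decreasing in $a$, starting from $+\infty$ at $a = 0$. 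Therefore $f$ has at most one interior critical point, across which $f'$ switches from $+$ to $-$; such a point must be a global maximum, so the minimum of $f$ on $[0, \varphi^{-1}(1-c)]$ is attained at an endpoint and we obtain
\[
\nu^+(A) \ge \min\bigl(p(\varphi^{-1}(c)),\, p(\varphi^{-1}(1-c))\bigr).
\]

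The two candidates on the right are precisely the boundary measures of the simplex $\varphi^{-1}(c)\, B_1^n$ and of the complement $\R_+^n \setminus \varphi^{-1}(1-c)\, B_1^n$, while Definition \ref{def_B_A} picks the first when $c \ge 1/2$ and the second when $c < 1/2$. The main remaining obstacle is to check that this rule always selects the smaller candidate, equivalently that $H(c) := p(\varphi^{-1}(c)) - p(\varphi^{-1}(1-c))$ has the sign of $\tfrac12 - c$. I would argue this by noting $H(0) = H(\tfrac12) = H(1) = 0$ and computing
\[
H'(c) = \ell(\varphi^{-1}(c)) + \ell(\varphi^{-1}(1-c)),
\]
which satisfies $H'(0^+) = H'(1^-) = +\infty$ but $H'(\tfrac12) = 2\ell(\varphi^{-1}(\tfrac12)) < 0$---the last using the classical fact that the median $\varphi^{-1}(\tfrac12)$ of the Gamma$(n)$ distribution strictly exceeds its mode $n-1$ for $n \ge 2$. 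A short shape analysis of $H$ then forces it to be positive on $(0, \tfrac12)$ and negative on $(\tfrac12, 1)$, completing the proof.
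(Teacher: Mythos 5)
Your variational reduction is sound and in fact streamlines part of the paper's argument: the formula $\nu^+(A)=p(a)+p(b)$, the implicit differentiation $b'(a)=p(a)/p(b)$ along trapezoids of fixed measure, and the observation that $f'(a)=p(a)\bigl(\ell(a)+\ell(b(a))\bigr)$ with $\ell(a)+\ell(b(a))$ strictly decreasing (for $n\ge 2$) correctly force the minimum of $p(a)+p(b(a))$ to an endpoint, so $\nu^+(A)\ge\min\bigl(p(\varphi^{-1}(c)),p(\varphi^{-1}(1-c))\bigr)$. The paper reaches the same endpoint reduction differently (its Cases 2 and 3 fix one endpoint of the trapezoid and differentiate with respect to the radius of the comparison ball), and your single-parameter version is arguably cleaner. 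The problem is the final step: the claim that $H(c)=p(\varphi^{-1}(c))-p(\varphi^{-1}(1-c))$ has the sign of $\frac12-c$, i.e.\ that Definition \ref{def_B_A} always selects the smaller of the two candidates. This is precisely the paper's Case 1 and is the technical heart of the lemma, and your ``short shape analysis'' does not establish it.

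Concretely, the facts you list --- $H(0)=H(\tfrac12)=H(1)=0$, $H'(0^+)=H'(1^-)=+\infty$, $H'(\tfrac12)<0$ --- are consistent with a function that rises from $0$, dips below zero inside $(0,\tfrac12)$, returns to $0$ before $\tfrac12$, and then decreases through $H(\tfrac12)=0$; so they do not force $H>0$ on $(0,\tfrac12)$. To close the argument you must control the zero set of $H'(c)=\ell(\varphi^{-1}(c))+\ell(\varphi^{-1}(1-c))$ (e.g.\ show it vanishes exactly once in $(0,\tfrac12)$) or verify the inequality at interior critical points, and neither is free: $\ell(\varphi^{-1}(c))$ is decreasing while $\ell(\varphi^{-1}(1-c))$ is increasing in $c$, so $H'$ has no obvious monotonicity, and the sign of $H''$ reduces to comparing $x^{n+1}e^{-x}$ with $y^{n+1}e^{-y}$ at the conjugate quantiles $x=\varphi^{-1}(c)$, $y=\varphi^{-1}(1-c)$ --- a statement of the same flavour as the one being proved. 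The paper resolves this by parametrising by $x$, locating the critical points of $p(x)-p(y(x))$ (they satisfy $y=\frac{x(n-1)}{2x-(n-1)}$), and then proving $e^{(x-y)/(n-1)}\,y/x\le 1$ there via the substitution $\lambda=x/(n-1)\in(\tfrac12,1]$ and a further monotonicity computation; some argument of this kind is still needed in your write-up. (Your appeal to $\varphi^{-1}(\tfrac12)>n-1$ is fine and parallels the paper's use of Choi's theorem on Poisson medians, but it only yields $H'(\tfrac12)<0$, not the sign of $H$; also note your monotonicity of $\ell$ fails for $n=1$, though that case is trivial to treat separately.)
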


\begin{proof}
	 Let $c$ be the radius of the set $B_A$ (see Definition \ref{def_B_A}). We consider three cases.
	
	{\bf Case 1.} Assume  $a=0$ or $b=\infty$. To prove the isoperimetric inequality in this case we need only to prove that if $\int_0^x e^{-t} t^{n-1}dt =\int_y^\infty e^{-t} t^{n-1}dt < \frac{1}2\int_0^\infty e^{-t} t^{n-1}dt $, then $e^{-x}x^{n-1}\ge e^{-y}y^{n-1}$ (In the other case we can consider the complements of these sets).  Note that this means that in the definition of  $B_A$, among the simplex and the complement of the simplex, we always pick   the set of smaller boundary measure. 
	
	Let us first show that the condition $\int_y^\infty e^{-t} t^{n-1}dt <\frac{1}2 \int_0^\infty e^{-t} t^{n-1}dt $ implies that $y\ge n-1$. To this end we only have to show that $\int_{n-1}^\infty e^{-t} t^{n-1}dt \ge \frac{1}2 \int_0^\infty e^{-t} t^{n-1}dt $. Integration by parts yields 
	\begin{equation*}
		\frac{\int_{n-1}^\infty e^{-t} t^{n-1}dt }{ \int_0^\infty e^{-t} t^{n-1}dt} 
		 = e^{-(n-1)} \left( \frac{(n-1)^{n-1}}{ (n-1)!} + \frac{(n-1)^{n-2}}{(n-2)!} +\ldots + \frac{n-1}{1!} +1 \right),
	\end{equation*}
  so we only  have to show that $\Pr (\Poiss (k) \le k) \ge \frac{1}2$, where $\Poiss(\lambda)$ is the random variable of Poisson distriubution with parameter $\lambda$. Due to \cite[Theorem 1]{Cho}, the smallest integer $l$ for which $\Pr (\Poiss(\lambda) \le l) \ge \frac 12$ satisfies $\lambda - \log 2 \le l < \lambda +\frac 13$.
   This implies that $\Pr (\Poiss (k) \le k) \ge \frac{1}2$ and therefore  finishes the proof of the inequality $y\ge n-1$.

	One can easily check that the function $t^{n-1}e^{-t}$ is decreasing on the half-line $[n-1, \infty)$, so if $n-1\le x\le y$, then $e^{-x}x^{n-1}\ge e^{-y}y^{n-1}$ and we are done. Otherwise $x\le n-1\le y$ (since $x\le y$ and $n-1\le y$). Let us consider this case now. Note that the equation  $\int_0^x e^{-t} t^{n-1}dt =\int_y^\infty e^{-t} t^{n-1}dt  $ determines $y$ as a function of $x$ and $e^{-x}x^{n-1}=-y'e^{-y}y^{n-1}$. For $x=0$ the inequality  $e^{-x}x^{n-1}\ge e^{-y}y^{n-1}$ holds (and is in fact an equality, since $y(0)=\infty$). For $x=n-1$ the  function $e^{-x}x^{n-1}$ attains its maximum on $[0,\infty]$, so $e^{-x}x^{n-1}\ge e^{-y}y^{n-1}$ if $x=n-1$. Therefore it suffices to check whether $e^{\frac{x-y}{n-1}} \frac{y}x \le 1$ for every $x \le n-1$ at which  the derivative of $e^{-x}x^{n-1} - e^{-y}y^{n-1}$ vanishes. This derivative is equal to 
	\begin{multline*}
		e^{-x}x^{n-2}\left( -x+(n-1)  \right) - y'e^{-y}y^{n-2}\left( -y+(n-1) \right)  
		\\
		= e^{-x}x^{n-2} \left( -x+(n-1) +  \frac xy(n-1) -x    \right),
	\end{multline*}
	so we should check  values of $x$ satisfying $y(x)=\frac{x(n-1)}{2x-(n-1)}$. In particular, these values are greater than $\frac{n-1}2$. Note that if $y(x)=\frac{x(n-1)}{2x-(n-1)}$, then $e^{\frac{x-y}{n-1}} \cdot \frac{y}x = e^{-2\lambda\frac{1-\lambda}{2\lambda -1}} / (2\lambda -1)$, where $\lambda:=\frac{x}{n-1} \in (\frac{1}2, 1]$. The derivative of $e^{-2\lambda\frac{1-\lambda}{2\lambda -1}} / (2\lambda -1)$ is equal to $4(1-\lambda)^2 e^{-2\lambda\frac{1-\lambda}{2\lambda -1}} / (2\lambda -1)^{3} \ge 0$, so this function is non-decreasing and therefore less than its value in $1$ (for $\lambda \in(\frac 12,1]$). Hence the inequality  $e^{-x}x^{n-1} \ge e^{-y}y^{n-1}$ holds for $x$ such that $y(x)=\frac{x(n-1)}{2x-(n-1)}$ and the claim is proved.

	{\bf Case 2.} Assume $\nu(A) \le \frac 12$. Then we have
	\begin{align} \label{eq_case2}
		\int_a^b e^{-t}t^{n-1}dt  = \int_c^\infty e^{-t}t^{n-1}dt.
	\end{align}
	For a fixed $b>0$ this equality determines $a$ as a function of $c$ and $a'e^{-a}a^{n-1} = e^{-c}c^{n-1}$. If $c$ is such that $a(c)=0$, then the inequality we want to prove, $e^{-a}a^{n-1}+e^{-b} b^{n-1}\ge e^{-c}c^{n-1}$, holds as we proved in Case 1. Therefore it suffices to show that the derivative of $e^{-a}a^{n-1}- e^{-c}c^{n-1}$ as a function of $c$ is non-negative. Integration by parts of  \eqref{eq_case2} yields
	\begin{equation*}
		(n-1)\int_a^b e^{-t}t^{n-2}dt   - (n-1)\int_c^\infty e^{-t}t^{n-2}dt =- e^{-a}a^{n-1}+e^{-b} b^{n-1}+ e^{-c}c^{n-1},
	\end{equation*}
	so the derivative of $e^{-a}a^{n-1}- e^{-c}c^{n-1}$  is equal to 
	\begin{equation*}
		(n-1)(a'e^{-a}a^{n-2}- e^{-c}c^{n-2}) = (n-1) e^{-c}c^{n-2} \left(\frac ca -1\right).
	\end{equation*}
  Since $c\ge a$,  the derivative we consider is indeed non-negative.
	
	{\bf Case 3.} Assume $\nu(A) \ge \frac 12$. We proceed similarly as in Case 2. Since $\nu(A) \ge \frac 12$, we have
	\begin{align*}
		\int_a^b e^{-t}t^{n-1}dt  = \int_0^c e^{-t}t^{n-1}dt.
	\end{align*}
	For a fixed $a>0$ this equality determines $b$ as a function of $c$ and $b'e^{-b}b^{n-1} = e^{-c}c^{n-1}$. For $c$ such that $b(c)=\infty$ the inequality $e^{-a}a^{n-1}+e^{-b} b^{n-1}\ge e^{-c}c^{n-1}$, holds, what we proved in Case~1. Therefore it suffices to prove that the derivative of $(e^{-b} b^{n-1} - e^{-c}c^{n-1})$ is negative. Calculations similar to those carried out in Case 2  show  that this derivative is equal to $(n-1)e^{-c}c^{n-2}(\frac{c}b -1)$, which is negative, since $b>c$.
\end{proof}

		The next lemma allows us to restrict our attention to connected compact sets.	

\begin{lem}\label{simplification_lemma}
	If  for every connected compact set $A$ the inequality
	\begin{equation}\label{aim_comp}
		\nu(A^h)\ge \nu(B_A^h)-Lh^2
	\end{equation}
	holds for every $h\le h_0$ with $L,h_0$ depending on $A$ only, then the isoperimetric inequality \eqref{isoper} holds for every Borel set $A$.
\end{lem}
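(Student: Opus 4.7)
The plan is to combine Corollary \ref{cor_fin_balls} (which reduces \eqref{isoper} to the case of a finite union of $\ell_1$-balls) with a decomposition into connected components and the subadditivity of the isoperimetric profile.

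First I would fix a finite union of $\ell_1$-balls $A$ and write $A = C_1 \sqcup \cdots \sqcup C_m$, with $C_j$ its connected components. Each $C_j$ is a finite union of overlapping balls, hence a connected compact set; the hypothesis therefore supplies $L_j, h_{0,j}$ such that $\nu(C_j^h) \ge \nu(B_{C_j}^h) - L_j h^2$ for $h \le h_{0,j}$. Taking $h_0 := \min\bigl(\min_j h_{0,j},\, \tfrac{1}{2}\min_{i \ne j}\operatorname{dist}(C_i, C_j)\bigr)$ makes the $h$-neighbourhoods $C_j^h$ pairwise disjoint for $h \le h_0$, so summing over $j$ yields $\nu(A^h) \ge \sum_j \nu(B_{C_j}^h) - L h^2$ with $L := \sum_j L_j$. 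Since each $B_{C_j}$ is either a simplex or the complement of one, the smoothness of $\varphi$ and $\psi$ gives $\nu(B_{C_j}^h) = \nu(C_j) + h\,\nu^+(B_{C_j}) + O(h^2)$; substituting this, dividing by $h$, and passing to $\liminf_{h \to 0+}$ yields
\[
  \nu^+(A) \ge \sum_{j=1}^m \nu^+(B_{C_j}).
\]

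It remains to verify $\sum_j \nu^+(B_{C_j}) \ge \nu^+(B_A)$, which is subadditivity of the isoperimetric profile $f(x) := \nu^+(B_X)$ for $\nu(X) = x$ applied at $x_j = \nu(C_j)$ (with $\sum_j x_j = \nu(A)$). For $n = 2$ I would verify concavity of $f$ directly: $f(x) = t(x) e^{-t(x)}$, where $t(x) = \psi^{-1}(x)$ on $(0, \tfrac{1}{2})$ and $t(x) = \varphi^{-1}(x)$ on $(\tfrac{1}{2}, 1)$; a short calculation gives $f'(x) = 1 - 1/t(x)$ on the first piece and $f'(x) = 1/t(x) - 1$ on the second, both decreasing in $x$, with a downward jump at $x = \tfrac{1}{2}$. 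Combined with $f(0) = 0$, this yields concavity and hence subadditivity of $f$, closing the argument.

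The main obstacle is this final subadditivity step: the profile computation is clean in $n = 2$, but concavity in higher dimensions is not immediate, which is consistent with the authors' restriction of the main theorem to $n = 2$.
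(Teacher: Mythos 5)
Your argument is correct and follows essentially the same skeleton as the paper's proof: reduce to finite unions of $\ell_1$-balls via Corollary \ref{cor_fin_balls}, split into the compact connected components, apply the hypothesis \eqref{aim_comp} to each component on pairwise disjoint $h$-neighbourhoods, and then combine. The one genuine divergence is the combination step $\sum_j \nu^+(B_{C_j}) \ge \nu^+(B_A)$: the paper isolates this as Lemma \ref{lem_comp} and proves it for a disjoint pair (plus an obvious induction) by integration by parts and differentiation in the radii, in three cases, one of which uses Lemma \ref{lem_trap}; that argument is dimension-free. You instead prove the same subadditivity via concavity of the profile $f(x)=t(x)e^{-t(x)}$, and your computation is right: $f'(x)=1-1/t$ for $x<\tfrac12$, $f'(x)=1/t-1$ for $x>\tfrac12$, both decreasing, and since the junction radius $t_*$ solving $(1+t)e^{-t}=\tfrac12$ satisfies $t_*>1$, the derivative jumps downward at $x=\tfrac12$; with $f(0^+)=0$ this gives subadditivity. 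Your route is shorter and more transparent, but it is specific to $n=2$ (which suffices for \eqref{isoper} as stated), whereas the paper's Lemma \ref{lem_comp} keeps this part of the machinery valid for all $n$, which is relevant to the conjecture in higher dimensions. A further small difference in your favour: you make explicit the Taylor-expansion step $\nu(B_{C_j}^h)=\nu(C_j)+h\,\nu^+(B_{C_j})+O(h^2)$ needed to pass from the neighbourhood inequality to boundary measures, which the paper leaves implicit when invoking Lemma \ref{lem_comp}.
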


\begin{proof} Let $A$ be a Borel set of positive measure. Assuming \eqref{aim_comp} for connected bounded Borel sets, we are going to prove \eqref{isoper} for $A$.

	By Corollary \ref{cor_fin_balls} it suffices to prove \eqref{isoper} for finite unions of balls.  Moreover, if we show \eqref{isoper} for $\overline{A}$, the inequality for $A$ will follow (since $\nu(\overline{A}^h) = \nu(A^h)$ and $\nu(\overline{A}) \ge \nu(A)$).   Since \eqref{aim_comp} implies \eqref{isoper}, it suffices to prove \eqref{aim_comp} for a compact set $A$ with finitely many connected components $A_1, \ldots, A_N $, each of non-empty interior. Then for sufficiently small $h_0>0$ the sets $A_j^{h_0}$ are pairwise disjoint, so for any $h\in(0,h_0)$ we have $\nu(A^h)=\sum_{j=1}^M \nu(A_j^h)\ge \sum_{j=1}^M \nu(B_{A_j}^h) - NLh^2 $, because \eqref{aim_comp} holds for $A_i$.  To finish the proof  we use Lemma \ref{lem_comp} (see below),  and an obvious induction.
	\end{proof}

	\begin{lem} \label{lem_comp}
			If $C$ and $D$ are disjoint sets, then $\nu^+(B_{C})+\nu^+( B_{D})\ge \nu^+(B_{C\cup D})$.
	\end{lem}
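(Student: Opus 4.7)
The starting observation is that $\nu^+(B_A)$ depends only on $\nu(A)$, so setting $p:=\nu(C)$ and $q:=\nu(D)$ (and using $C\cap D=\emptyset$, hence $\nu(C\cup D)=p+q$) the claim reduces to a single-variable inequality. I would define $f\colon [0,1]\to [0,\infty)$ by $f(s):=\nu^+(B_A)$ for any $A$ with $\nu(A)=s$; by the formulas recalled at the start of Section~2, $f(s)=C_n e^{-t(s)}t(s)^{n-1}$, where $t(s)=\psi^{-1}(s)$ for $s\le \frac{1}{2}$ and $t(s)=\varphi^{-1}(s)$ for $s\ge \frac{1}{2}$. The lemma then becomes
\[
f(p)+f(q)\ge f(p+q),\qquad p,q\ge 0,\ p+q\le 1.
\]

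Next, I would establish four structural properties of $f$. First, $f(0)=f(1)=0$. Second, the symmetry $f(s)=f(1-s)$, which is immediate from $\varphi(t)+\psi(t)=1$ together with the fact that both $tB_1^n$ and its complement share the boundary measure $C_n e^{-t}t^{n-1}$. Third, $f$ is concave on $(0,\frac{1}{2})$: a direct differentiation gives $f'(s)=1-(n-1)/t(s)$ and hence $f''(s)=-(n-1)/(t(s)^2 f(s))<0$. Fourth, $f$ is non-decreasing on $[0,\frac{1}{2}]$: the map $\tau\mapsto C_n e^{-\tau}\tau^{n-1}$ is decreasing on $[n-1,\infty)$, and the first half of Case~1 in the proof of Lemma~\ref{lem_trap} already showed $\psi^{-1}(s)\ge n-1$ for all $s\le \frac{1}{2}$; composing these gives monotonicity. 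By the symmetry, $f$ is then concave and non-increasing on $[\frac{1}{2},1]$, and attains its global maximum at $\frac{1}{2}$.

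With these four properties I would finish by a short case analysis, assuming without loss of generality $p\le q$. If $p+q\le \frac{1}{2}$, then all three arguments lie in $[0,\frac{1}{2}]$, and concavity together with $f(0)=0$ makes $f$ subadditive on this interval by the standard chord comparison, so $f(p)+f(q)\ge f(p+q)$. If $q\ge \frac{1}{2}$, then $1-(p+q)\le 1-q\le \frac{1}{2}$, and symmetry together with monotonicity yield $f(p+q)=f(1-p-q)\le f(1-q)=f(q)\le f(p)+f(q)$. In the remaining case $p,q<\frac{1}{2}\le p+q$ I would chain three inequalities: monotonicity gives $f(p)\ge f(\frac{1}{2}-q)$ (since $p\ge \frac{1}{2}-q$), subadditivity on $[0,\frac{1}{2}]$ gives $f(\frac{1}{2}-q)+f(q)\ge f(\frac{1}{2})$, and the maximum property gives $f(\frac{1}{2})\ge f(p+q)$.

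The main obstacle I expect is the monotonicity step: the formula $f'(s)=1-(n-1)/t(s)$ has indefinite sign without the lower bound $t(s)\ge n-1$, a genuinely non-trivial Poisson-type fact, which is however precisely the estimate already handled in the first part of the proof of Lemma~\ref{lem_trap}. Everything else reduces to standard manipulation of nonnegative concave functions vanishing at the endpoints.
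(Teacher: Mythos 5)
Your proof is correct, but it takes a genuinely different route from the paper. The paper fixes the radii $x,y,z$ of $B_C$, $B_D$, $B_{C\cup D}$ and runs a three-case analysis according to which of $\nu(C),\nu(D),\nu(C\cup D)$ lie below or above $\tfrac12$; in each case it integrates the defining identity (such as \eqref{assump_comp}) by parts and shows a monotonicity statement for an implicitly defined function — the same calculus technique used in Lemma \ref{lem_trap}, on which Cases 2 and 3 also lean directly. You instead recast the lemma as subadditivity of the profile $f(s)=\nu^+(B_A)$, $s=\nu(A)$, and derive it from four structural facts: $f(0)=f(1)=0$, the symmetry $f(s)=f(1-s)$ (from $\varphi+\psi\equiv 1$ and the equality of boundary measures of a simplex and its complement), concavity on $(0,\tfrac12)$ via $f'(s)=1-(n-1)/t(s)$, $f''(s)=-(n-1)/(t(s)^2f(s))\le 0$ (your computations check out; note that for $n=1$ one gets $f''=0$, which is still fine), and monotonicity on $[0,\tfrac12]$ from $\psi^{-1}(s)\ge n-1$ — exactly the Poisson-median estimate (Choi) proved inside Case 1 of Lemma \ref{lem_trap}, extended to the boundary value $s=\tfrac12$ by monotonicity of $\psi$, so both proofs ultimately consume the same nontrivial input. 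Your three-case chord argument ($p+q\le\tfrac12$; $q\ge\tfrac12$; $p,q<\tfrac12\le p+q$) is complete, since $p+q=\nu(C\cup D)\le 1$ automatically and continuity of $f$ at $0$ and $\tfrac12$ is immediate. What each approach buys: yours is more conceptual, isolates the relevant property of the extremal profile (concavity/symmetry/unimodality, which is of independent interest and immediately gives the general statement for any finite disjoint union by induction), while the paper's stays entirely within the implicit-differentiation framework already set up and avoids introducing and differentiating the inverse functions $\varphi^{-1},\psi^{-1}$ explicitly.
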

		
	\begin{proof}
	By $x,y,z$ we denote the radii of $B_C$, $B_D$ and $B_{D\cup C}$ respectively. 
Note that if $\nu(C)\ge \frac{1}2$, then $\nu(D)<\frac 12$ and $\nu(C\cup D) >\frac 12$. Therefore it suffices to consider the following three cases.

	{\bf Case 1.} Assume $\nu(C), \nu(D), \nu({C\cup D}) < \frac{1}2$. Without loss of generality we may assume $x\ge y$. By the definition of the sets $B_C, B_D$ and $B_{C\cup D}$ we have
	\begin{align} \label{assump_comp}
		\int_x^\infty e^{-t}t^{n-1}dt + \int_y^\infty e^{-t}t^{n-1}dt = \int_z^\infty e^{-t}t^{n-1}dt.
	\end{align}
	Integration by parts implies that the inequality $e^{-x}x^{n-1} +e^{-y}y^{n-1} = \nu^+(B_C)+\nu^+(B_D) \ge \nu^+(B_{C\cup D}) = e^{-z}z^{n-1}$ is equivalent to
	\begin{align} \label{claim_comp}
		\int_x^\infty e^{-t}t^{n-2}dt + \int_y^\infty e^{-t}t^{n-2}dt \le \int_z^\infty e^{-t}t^{n-2}dt.
	\end{align}
	We will prove that \eqref{assump_comp} implies \eqref{claim_comp} for $x,y,z\ge0$ such that $x\ge y$. Fix $z\ge 0$. Then  equality \eqref{assump_comp}, determines $x$ as a smooth function of $y$ and $x'e^{-x}x^{n-1}+e^{-y} y^{n-1} =0$. Note that  $x(z)=0$ and thus  for $y=z$ inequality \eqref{claim_comp} holds  (and is in fact an equality). To end the proof in Case~1 we will show that the left-hand side of \eqref{claim_comp} is a decreasing function of $y$ for $y>z$ such that $x(y)> y$. The derivative of  the left-hand side of \eqref{claim_comp} is equal to $-(x'e^{-x}x^{n-2}+ e^{-y} y^{n-2})= e^{-y}y^{n-2}(\frac{y}x-1)$ which is negative for $x>y$. Note that in this case we have used the fact that $\nu(B_C), \nu(B_D), \nu(B_{C\cup D}) \le \frac{1}2 $ only to obtain  \eqref{assump_comp}. 
	
	{\bf Case 2.} Assume $\nu(C), \nu(D) < \frac 12 ,  \  \nu({C\cup D}) \ge \frac{1}2$. Let $v$ be such that $\int_0^z e^{-t}t^{n-1}dt = \int_v^\infty e^{-t}t^{n-1}dt$. Then \eqref{assump_comp}, and consequently \eqref{claim_comp}, holds with $v$ in  place of $z$ and thus $e^{-x}x^{n-1} +e^{-y}y^{n-1} \ge e^{-v}v^{n-1} \ge e^{-z}z^{n-1}$ -- the last inequality holds by Lemma \ref{lem_trap} applied to $a=v$ and $b=\infty$.

	{\bf Case 3.} Assume $\nu(C)<\frac 12, \ \nu(D),   \nu({C\cup D}) \ge \frac{1}2$. Then
	\begin{align} \label{assump_comp2}
		\int_x^\infty e^{-t}t^{n-1}dt + \int_0^y e^{-t}t^{n-1}dt = \int_0^z e^{-t}t^{n-1}dt.
	\end{align}
	Fix  any $x>0$ satisfying $\int_x^\infty e^{-t}t^{n-1}dt \le \frac 12$. Equality \eqref{assump_comp2} determines $z$ as a function of $y$ and $e^{-y}y^{n-1} = z' e^{-z}z^{n-1}$.
	We want to show that $e^{-x}x^{n-1} +e^{-y}y^{n-1}\ge e^{-z}z^{n-1}$. Note that for $y$ such that $\nu(yB_1^n)=\frac 12$ we have $\nu(z(y)B_1^n) \ge \frac 12$ and Case 2 implies that then $e^{-x}x^{n-1} +e^{-y}y^{n-1}\ge e^{-z}z^{n-1}$. Thus it suffices to show that $e^{-y}y^{n-1}- e^{-z}z^{n-1}$ is increasing in $y$. 
	We integrate \eqref{assump_comp2} by parts and get
	\begin{multline}  \label{case3}
		(n-1)\int_x^\infty e^{-t}t^{n-2}dt +e^{-x}x^{n-1} -\left(e^{-y}y^{n-1} - e^{-z}z^{n-1} \right) \\ = (n-1)\int_0^z e^{-t}t^{n-2}dt -  (n-1)\int_0^y e^{-t}t^{n-2}dt.
	\end{multline}
	Therefore we only need to show that the derivative of the right-hand-side of \eqref{case3} is negative. This derivative is equal to
	\begin{align*}
		(n-1) \left(z'e^{-z}z^{n-2} -  e^{-y}y^{n-2} \right) = (n-1)e^{-y}y^{n-2} \left( \frac yz -1 \right),
	\end{align*}
	what is negative since $y<z$.	
	\end{proof}
	
	From now on we assume $n=2$. Otherwise the symmetrisation described below, as well as the final argument,  does not work. Let $T:=\overline{B_1^2\cap \big([0,\infty)^2 \cup (-\infty, 0]^2\big)}$.

	\begin{rem}\label{rem_T}
	Note that  for sets $A$ of the form $rB_1^2 \cap \Rp$ or $\Rp\setminus rB_1^2$ we have $\nu(A+hT)=\nu(A^{h})$, and for any compact set $A$ the inequality $\nu(A+h_nT)\le \nu(A^{h_k})$ holds  for some sequence $(h_k)_{k\ge 0}$ (depending on $A$) which tends to $0$. (We pick a sequence $(h_k)_{k\ge 0}$, because it may happen that 
	\[
	\nu(A+hT) \neq \nu \Big(A+h \big(B_1^2\cap \large([0,\infty)^2 \cup (-\infty, 0]^2 \large) \big) \Big),
	\]
	 but only for finitely many $h>0$.) Therefore in order to prove \eqref{aim_comp}  it suffices to show that every connected compact set $A$ satisfies
	\begin{equation}\label{aim_comp_tr}
		\nu(A+hT) \ge \nu(B_A+hT) -Lh^2 \quad \mbox{for } h\le h_0,
	\end{equation}
	where $L$ is an absolute constant and $h_0$ depends on $A$ only.
	\end{rem}

	\begin{defi}
	For a Borell set $A\subset \R_+^2$  and $t>0$ we define
	\[ 
		 f_A(t):=\mathcal{H}_{1}(A\cap S_t),
	\]
	where $\mathcal{H}_1$ is the one-dimensional Hausdorff measure and  $S_t:=\{(x,y)\in \Rp: x+y =t \}$.
	\end{defi}

	Clearly, $f_A$ is a measurable function. Moreover, for any Borell set  $A$~of~$\R_+^2$ we have 
	\begin{eqnarray*}
		\mu (A) = \int_{(x,y)\in A} e^{-(x+y)}dxdy = \int_{0}^\infty \int_{y: (t-y,y) \in A} e^{-t}dydt 			=\int_0^\infty \frac{1}{\sqrt{2}} f_A(t)e^{-t}dt.
	\end{eqnarray*}

	The next lemma introduces  a symmetrisation which  preserves  the function $f_A$. Moreover, the lemma states that  this symmetrisation does dot increase the boundary measure of the symmetrised set. This symmetrisation is illustrated on next two figures.
	
\begin{figure}[h]
\centering
\includegraphics[width=0.75\textwidth]{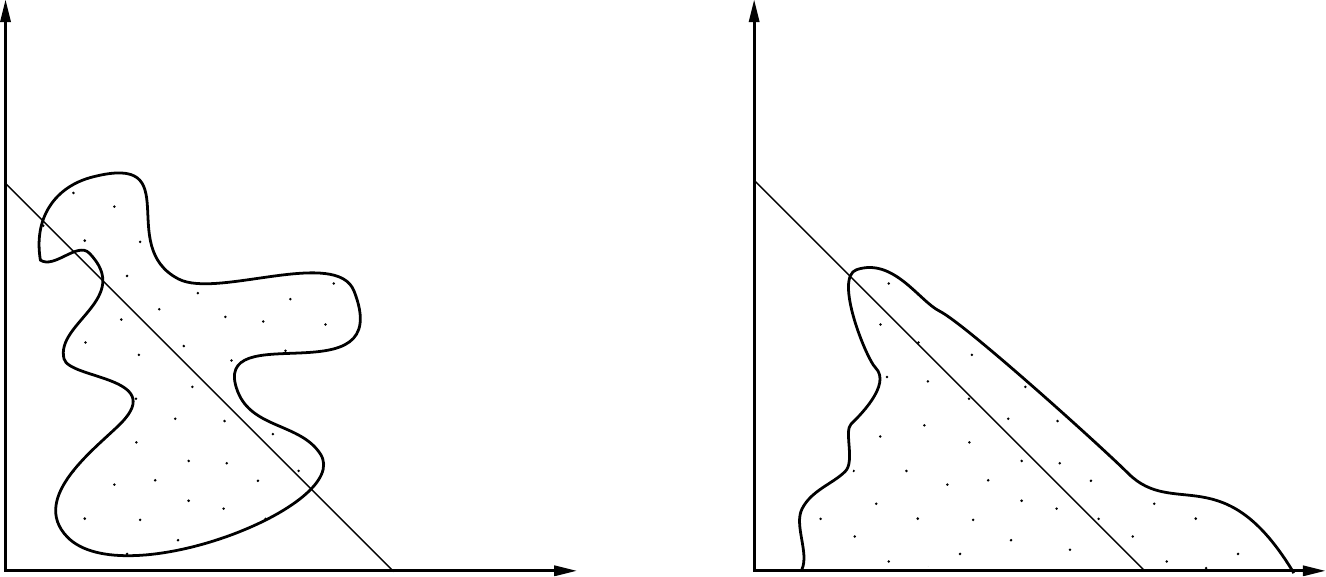}
\\ \hspace{0.03\textwidth} { \bf Fig. 1a.}  \hspace{0.33\textwidth} {\bf Fig. 1b.}
\\ \hspace{-0.1\textwidth} Set $A$ before the symmetrisation \hspace{0.22\textwidth} Set $C=C_A$ 
\end{figure}

\begin{lem} \label{lemat_red_1}
	For any Borel set $A\subset \Rp$ we introduce
	\[
		C=C_A:=\bigcup_{t>0} \left\{ (x,y)\in S_t: y\leqslant \frac{f_A(t)}{\sqrt{2}}  \right\} .
	\]
Then $\mu(C)=\mu(A)$ and $\mu (C+hT)\leqslant \mu (A+hT)$ for every $h>0$.
\end{lem}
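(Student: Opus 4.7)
My plan is to prove the two assertions separately. The identity $\mu(C) = \mu(A)$ is immediate from the slicing formula $\mu(E) = \int_0^\infty f_E(t)\,\frac{e^{-t}}{\sqrt{2}}\,dt$, since $f_C = f_A$ by construction.

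For the inequality $\mu(C+hT) \le \mu(A+hT)$ the strategy is to reduce to a slicewise comparison: it suffices to show $\mathcal{H}_1((C+hT) \cap S_t \cap \R_+^2) \le \mathcal{H}_1((A+hT) \cap S_t \cap \R_+^2)$ for a.e.\ $t > 0$, after which the result follows by integration against $\frac{e^{-t}}{\sqrt{2}}\,dt$. I would pass to coordinates $(s,y)$ with $s = x+y$, in which the slices $S_t$ become vertical lines $\{s = t\}$ and the set $T$ transforms into $T' = \{(s,y) : 0 \le y \le s \le 1\} \cup \{(s,y) : -1 \le s \le y \le 0\}$. Writing $A_s = \{y : (s,y) \in A\}$ and $g_A(s) := |A_s| = f_A(s)/\sqrt{2}$ (with $|\cdot|$ denoting $1$-dimensional Lebesgue measure), the symmetrised slice becomes $C_s = [0, g_A(s)]$, and a direct computation of $(s_0, y_0) + hT'$ for $(s_0, y_0) \in C$ yields
\[
(C + hT')_s \cap [0,s] = [0, M(s)], \qquad M(s) := \min(\widetilde M(s),\, s),
\]
where $\widetilde M(s) := \max\bigl(\sup_{\delta \in [0,h]} (g_A(s-\delta)+\delta),\ \sup_{\delta \in [0,h]} g_A(s+\delta)\bigr)$.

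The core of the argument is then to show $|(A + hT')_s \cap [0,s]| \ge M(s)$ for a.e.\ $s$. I would establish two independent lower bounds, each matching one term inside $\widetilde M$. For the first, for every $\delta \in [0,h]$ the set $A_{s-\delta} + [0,\delta]$ lies inside $(A+hT')_s \cap [0,s]$ (the latter inclusion because $A_{s-\delta} \subset [0, s-\delta]$), and one-dimensional Brunn--Minkowski gives $|A_{s-\delta} + [0,\delta]| \ge g_A(s-\delta) + \delta$.

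The main obstacle is the matching lower bound from the other direction: I need $|(A_{s+\delta} + [-\delta,0]) \cap [0,s]| \ge \min(g_A(s+\delta), s)$ for every $\delta \in [0,h]$. Brunn--Minkowski alone is insufficient here because shifting $A_{s+\delta}$ leftwards by up to $\delta$ can push $\delta$ units of mass out of $[0,s]$. Instead I would argue by complementation: a point $y \in [0,s]$ fails to lie in $A_{s+\delta} + [-\delta,0]$ exactly when the interval $[y, y+\delta]$ is disjoint from $A_{s+\delta}$, equivalently when $[y, y+\delta] \subset F$ for $F := [0, s+\delta] \setminus A_{s+\delta}$. The bad set is therefore the Minkowski erosion $F \ominus [0,\delta]$; inspection on each connected component of $F$ gives $|F \ominus [0,\delta]| \le \max(0, |F| - \delta) = \max(0, s - g_A(s+\delta))$, and subtracting from $s$ yields the desired bound. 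Combining the two bounds, taking suprema in $\delta$, and integrating against $e^{-s}\,ds$ on $(0,\infty)$ concludes the proof.
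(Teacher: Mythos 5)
Your proposal is correct in substance, and while it shares the paper's overall architecture (the identity $\mu(C)=\mu(A)$ from the slicing formula, then a reduction to a slicewise comparison of section lengths), the key step is executed by a genuinely different argument. The paper fixes a pair of levels $t,s$ and proves $\mathcal{H}_1\big(S_s\cap(A\cap S_t+hT)\big)\ge \mathcal{H}_1\big(S_s\cap(C\cap S_t+hT)\big)$ in two cases: for $s\ge t$ by essentially the same inclusion you use (a lowest point of $A\cap S_t$ plus a translated segment), and for $t>s$ by a contradiction argument that applies the first case to the complementary sections and plays it off against $f_A(t)=f_C(t)$. You instead compute the whole profile of $C+hT$ explicitly, $M(s)=\min(\widetilde M(s),s)$, and match it by two direct lower bounds on the profile of $A+hT$; your erosion estimate $|F\ominus[0,\delta]|\le\max(0,|F|-\delta)$ is a direct, quantitative version of the paper's complementation trick, and it avoids the argument by contradiction and the auxiliary set $D$, at the price of not reusing the $s\ge t$ case. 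One caveat: your first bound uses one-dimensional Brunn--Minkowski for $A_{s-\delta}+[0,\delta]$, which needs $A_{s-\delta}\neq\emptyset$, whereas the term $g_A(s-\delta)+\delta$ appears in $\widetilde M(s)$ even when $g_A(s-\delta)=0$, because the literal definition of $C$ places the point $(t,0)$ in $C$ for every $t>0$. This mismatch is an artifact of the statement's formulation (with that literal reading the comparison cannot hold for sets $A$ whose low sections are empty, since $C+hT$ then contains a strip along the axis); under the intended reading, in which levels with $f_A(t)=0$ contribute nothing to $C$, restricting your supremum to $\delta$ with $A_{s-\delta}\neq\emptyset$ makes the matching exact. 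Note the paper's own proof makes the same implicit nonemptiness assumption when it selects the lowest point of $A\cap S_t$, so this is not a gap specific to your argument; the remaining endpoint and measurability issues you gloss over are harmless since only lengths are integrated.
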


\begin{proof}
By the definition of  $C$  we get $f_A=f_C$, so $\mu(A) = \mu(C)$. 

We will prove that for all $s,t,h>0$ we have
	\begin{eqnarray}\label{pom_5}
		\mathcal{H}_1 \big(S_s\cap (A\cap S_t +hT)\big) \geqslant \mathcal{H}_1 \big(S_s\cap (C\cap S_t+hT)\big).
	\end{eqnarray}
	This implies (since all the sets  $S_s\cap (C\cap S_t+hT) $ are intervals with endpoints at $(s,0)$) that $f_{A+hT} \ge f_{C+hT}$ and thus 
	\[
		\mu(A+hT) = \int_0^\infty \frac{1}{\sqrt{2}}f_{A+hT}(t) e^{-t} dt \ge \int_0^\infty \frac{1}{\sqrt{2}}f_{C+hT}(t) e^{-t} dt =\mu(C+hT)  .
	\]

Let us first consider the case $s\ge t$. It suffices to consider $h=s- t$, since for $h>s-t$ both  sides of \eqref{pom_5} do not change, while for $h<s-t$ both sides of \eqref{pom_5} vanish. Let $x$ be the point $(t-u, u)$, where $u$ is the smallest possible non-negative number such that the point $(t-u, u)$ belongs to $A$ (see Figure 2a). Since $h=s-t$, we have 
	\begin{equation*}
		(A\cap S_t+he_2) \cup ( x +S_h )  \subset S_s\cap (A\cap S_t)^h,
	\end{equation*}
	where $e_2=(0,1)$.

\begin{figure}[h]
\centering
\includegraphics[width=0.85\textwidth]{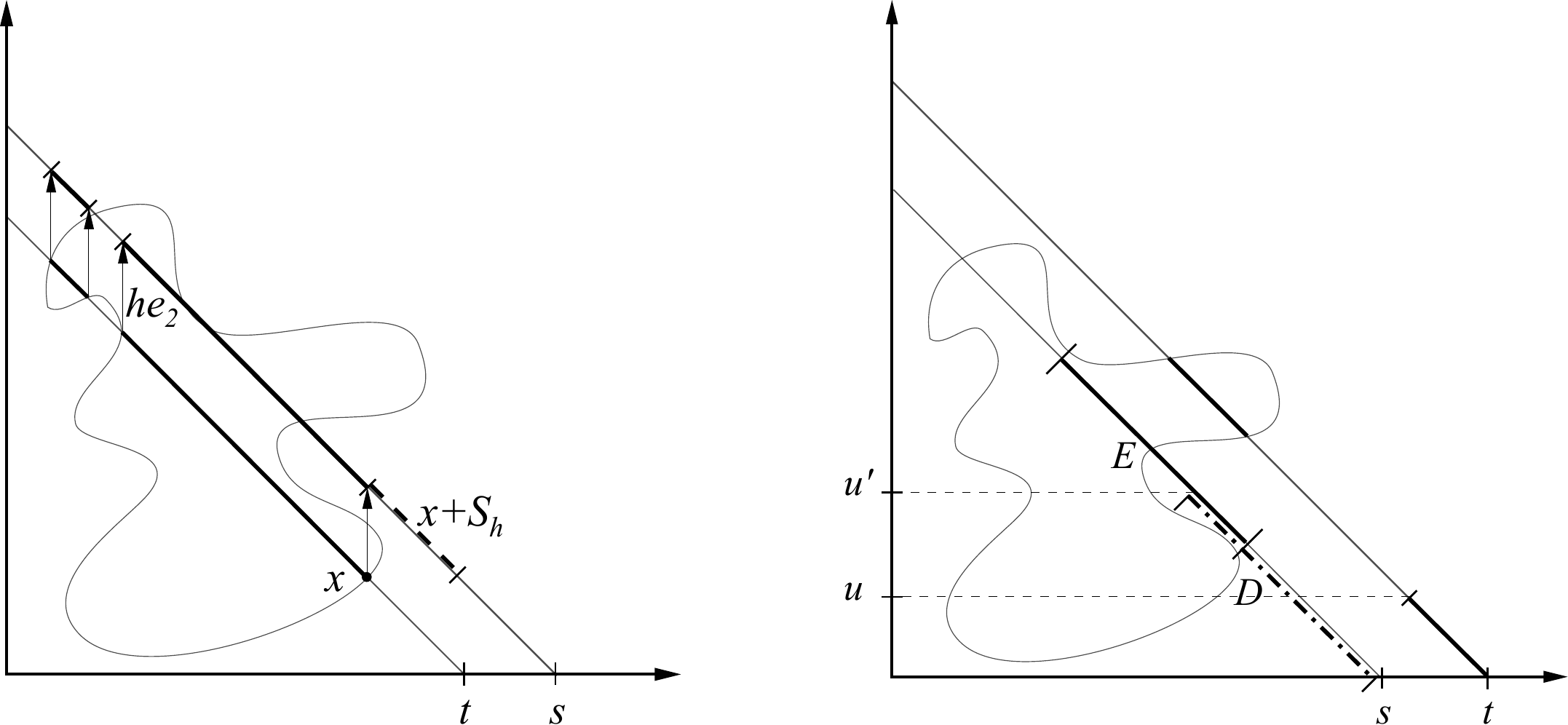}
\\ \hspace{-0.03\textwidth} { \bf Fig. 2a.}  \hspace{0.38 \textwidth} {\bf Fig. 2b.}
\end{figure}

	Moreover, this inclusion becomes an equality if we replace $A$ by $C$. Therefore 
	\begin{align*}
		\mathcal{H}_1 \big(S_s\cap (A\cap S_t+hT)\big) \geqslant \mathcal{H}_1(A\cap S_t) + \sqrt{2}h = \mathcal{H}_1(C\cap S_t) + \sqrt{2}h  =  \mathcal{H}_1 \big(S_s\cap (C\cap S_t)^h\big),
	\end{align*}
which shows that \eqref{pom_5} is satisfied in the case $t\le s$. 

 	Let us assume now that $t>s$. Again, it suffices to consider $h=t-s$. Suppose \eqref{pom_5} does not hold. Let $u'\ge 0$ be such that $E:=S_s\cap (A\cap S_t)^h $ has the same Hausdorff measure as $D:=S_s\cap \{(y_1,y_2) : y_2\leqslant u' \}$ (see Figure 2b). Let $u$ is given by  $C\cap S_t = S_t\cap\{(y_1,y_2): y_2\leqslant u \}$. Since \eqref{pom_5} does not hold, $u' < u \wedge s$ (note that in Figure 2b we have $u'\ge u$, since this figure reflects the true situation, whereas we are arguing by contradiction). By the conclusion of the first case (in which we had $t\leqslant s$),  we have
	\[
		\mathcal{H}_1 \big((S_s\setminus E+hT) \cap S_t \big) \geqslant \mathcal{H}_1 \big((S_s\setminus D+hT) \cap S_t \big) = \mathcal{H}_1 \big(\{(y_1,y_2):\ y_2\geqslant u' \} \cap S_t \big),
	\]
	since the sets $S_s\setminus E$ and $S_s \setminus D$ are of the same Hausdorff measure.

Moreover, by the definition of the set $E$ we get  $(S_s\setminus E +hT) \cap S_t \subset S_t\setminus A$. Therefore
	\begin{multline*}
		\mathcal{H}_1  ( A\cap S_t) \leqslant \mathcal{H}_1 \big( S_t \setminus \{(y_1,y_2): \ y_2\geqslant u' \} \big) = \mathcal{H}_1 \big( S_t \cap \{(y_1,y_2): \ y_2\leqslant u' \} \big)
		\\
		 < \mathcal{H}_1 \big( S_t \cap \{(y_1,y_2): \ y_2\leqslant u \} \big)  		= \mathcal{H}_1 (S_t\cap C), 
	\end{multline*}
which contradicts the property $\mathcal{H}_1  ( A\cap S_t) =  \mathcal{H}_1 (S_t\cap C)$. Hence \eqref{pom_5} is satisfied also in the case $s<t$. 
\end{proof}

Note that in higher dimensions the above proof works in the case $s\ge t$ (we only have to additionally use the Brunn-Minkowski inequality for an arbitrary set and a simplex). However, the same reasoning as above shows, that the analogue of \eqref{pom_5} for $s<t$ holds if we consider $\R_+^n \setminus D$ (where $\R_+^n \setminus D$ has the same measure as $C$, and $D$ is such that $C_D=D$) instead of $C$. Therefore \eqref{pom_5} fails in general if  $s<t$  and $n>2$. The reason why \eqref{pom_5} works for $n=2$ is that $S_t$ is an interval and therefore the sections of $C$ and $D$ (at the level $t$) are both intervals starting from an end point of $S_t$.

Now we are ready to prove the main theorem. Its proof clarifies, how to replace the set $C_A$ by a \emph{trapezoid}. This reasoning  fails in higher dimensions too. Also the induction over $n$ does not work, since a section parallel to the hyperplane $\operatorname{lin}(e_1,\ldots , e_{n-1})$  of a connected set does not have to be connected.

\begin{proof}[Proof of Theorem \ref{main_theorem}]
	Due to Lemma \ref{simplification_lemma} and Remark \ref{rem_T}  it suffices to prove $\nu(A+hT)\ge \nu(B_A+hT)-Lh^2$  for connected bounded compact sets $A$ and  for sufficiently (depending on $A$) small $h>0$. By Lemma \ref{lemat_red_1}  it suffices to prove that for sufficiently small $h$  the inequality $\nu(C+hT)\ge \nu(B_C+hT)-Lh^2$ holds for $C=C_A$. Let $f:=f_A=f_C$. 
	
	Note that for every Borel set $A$ and $h>0$ we have $\nu(A+he_1)=\nu(A+he_2)=e^{-h}\nu(A)$. Moreover, if $A - he_1\subset \Rp $ (or $A - he_2\subset \Rp $), then  $\nu(A-he_1)=e^h\nu(A)$ (or $\nu(A-he_2)=e^h\nu(A)$, respectively). We will use this observation throughout the proof.
	
	Recall that $C$ is compact and connected. Therefore, if for every $u>0$ we have $f(u)<\sqrt{2}u$ and $\operatorname{supp}f \subset (0,\infty)$, then there exists $\eps>0$ such that $f(u)<\sqrt{2}(u-\eps)$ for every $u>\eps$ and $f(u)=0$ for every $u\le \eps$ (this means that $C$ does not intersect the strip $[0,\eps)\times [0,\infty)$). Hence for every $h\in (0,\eps)$ we have $(C-he_1 )\subset C^h \cap \Rp$, where $e_1=(1,0)$, so 
	\[
	\nu (C^h)\ge \nu(C-he_1) = \nu(C)e^h = \nu(D)e^h \ge \nu (D+hT),
	\]
	 where $\nu(D)=\nu(C)$ and  $D$ is the complement of $rB_1^n$, and the last inequality follows since $D+hT \subset D-he_1$. Therefore we can restrict our attention to  the case in which there exists $u\ge 0$ such that $f(u)=\sqrt{2}u$. Let $u$ be the smallest value for which $\sqrt{2}u=f(u)$ (the minimal $u$ exists since $C$ is compact).
	
	Let $a\le u \le b$ be such that $\nu(uB_1^2\setminus aB_1^2) = \nu(C\cap uB_1^2)$ and $\nu(bB_1^2\setminus uB_1^2) = \nu(C\setminus uB_1^2)$. In other words we pick such $a$ and $b$, that the \emph{trapezoid} between $a$ and $u$ has the same measure as $C$ below $u$ (and similarly the \emph{trapezoid} between $u$ and $b$ has the same measure as $C$ above $u$).  We will show that 
	\begin{equation}\label{pom_6}
	\nu\big(C\cup (\Rp \setminus uB_1^2)+hT\big) \ge \nu\big(\Rp \setminus (a-h)B_1^n\big)  \quad \mbox{ for } 0<h< \min \{h_0,a\}
	\end{equation} 
	and 
	\begin{equation}\label{pom_7}
	\nu(C\cup uB_1^2+hT) \ge \nu\big((b+h)B_1^2 \big) - Lh^2 \quad \mbox{ for } h>0,
	\end{equation}
	 where $h_0:=\min\left\{ \max\{ \lambda: \nu_1(R_\lambda) \le \nu_1([a,u]) \} , h_1 \right\}$ and $R_{\lambda} :=\{t\in (\lambda,u): t- \frac{f(t)}{\sqrt{2}}<\lambda \}$, and $\nu_1$ is the marginal distribution of $\nu$, i.e. the exponential measure on the half-line, and $h_1$ is such that $\operatorname{supp} f\subset (h_1,\infty)$. By Lemma \ref{lem_trap}, inequalities \eqref{pom_6} and \eqref{pom_7} will finish the proof of the theorem (we will see below that $h_0>0$ if $a\neq 0$), since \eqref{pom_6} say that the  $hT$ neighbourhood of $C$ below $u$ is not less than  the  $hT$ neighbourhood of the \emph{trapezoid} between $a$ and $u$ (and similarly \eqref{pom_7} gives us an analogous estimate above $u$, up to a term $Lh^2$).

	Let us first show  \eqref{pom_6} (Figure 3a is attached for the reader's convenience). If $a=0$ or $a=u$, we have nothing to prove. Suppose therefore that $u>a>0$ and $h<h_0 $.  Note that $h_0>0$, since $0<a<u$ and for every $0<t<u$ we have $f(t)<\sqrt{2}t$. 
	
\begin{figure}[h]
\centering
\includegraphics[width=0.85\textwidth]{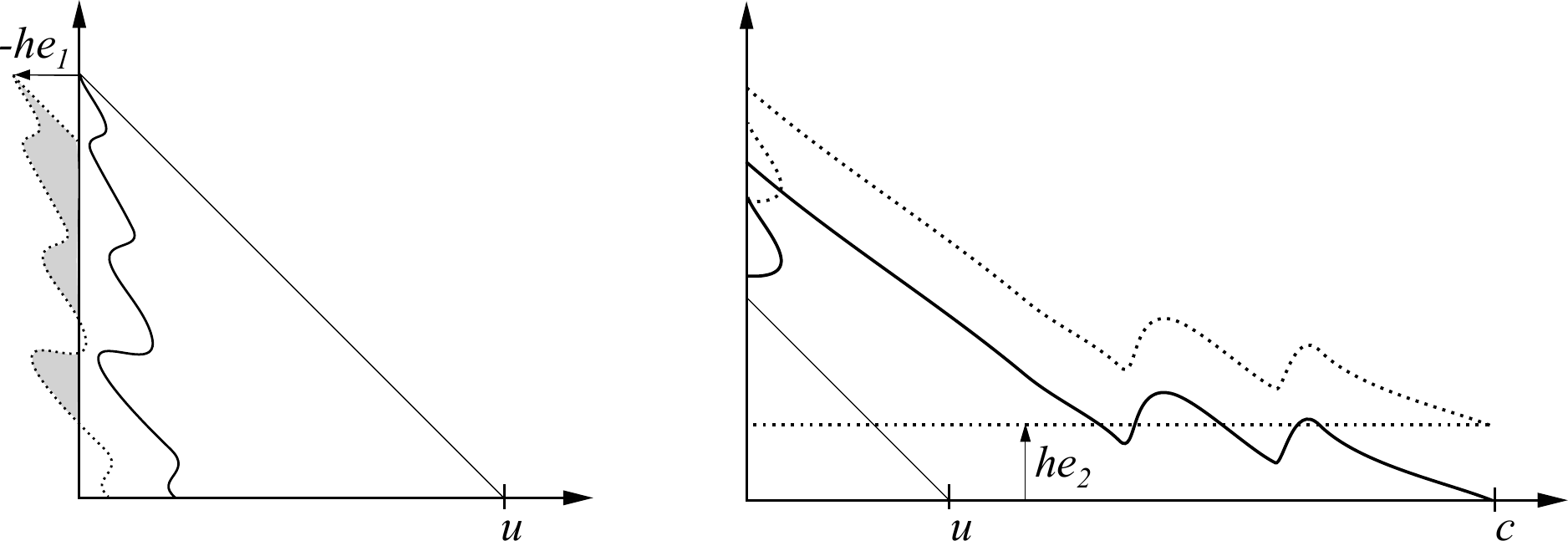}
\\ \hspace{-0.05\textwidth} { \bf Fig. 3a.} \qquad  \qquad \qquad \qquad  \hspace{0.17\textwidth}{\bf Fig. 3b.}
\end{figure}

	Moreover,   
	\[
	\big((C \cap   uB_1^2 -he_1)\cap \Rp\big) \cup \big(\Rp \setminus (u-h)B_1^n\big) \subset \big(C\cup (\Rp \setminus  uB_1^2)\big) +hT 
	\]
	 and, since $h\le h_1$,  the set
	 $\Rp\setminus (C \cap  uB_1^2-he_1 ) $ (see  the grey set in Figure 3a) is contained in the set $ \bigcup_{\delta=0}^h \big( \{0 \}\times (R_h-h) +(-\delta, \delta)\big)$, which is the translation by the vector $-he_1$ of a set of measure $h\nu_1(R_h)$. By the definition of $h_0$ we know that for $h\le h_0$ we  have $\nu_1(R_h) \le \nu_1([a,u]) $. Therefore
	\begin{align*}
		\nu\big((C\cup (\Rp \setminus & uB_1^2)) +hT  \big)
		\\ &
		  \ge  \nu\big(\Rp  \setminus (u-h)B_1^2\big)  + e^h\nu (C\cap uB_1^n )
		 - e^h h \cdot \nu_1(R_h) 
		 \\ &
		 \ge  \nu\big(\Rp \setminus (u-h)B_1^2\big) + e^h\nu ( uB_1^n \setminus aB_1^n ) 
		 - e^h h  \cdot \nu_1([a,u])
		 \\ & =  \nu\big(\Rp \setminus (a-h)B_1^n\big) ,
	\end{align*}
	what yields inequality \eqref{pom_6}.

	 We will prove inequality \eqref{pom_7} (Figure 3b may be helpful to follow the estimates). Note that the fact that  $C$ is connected implies that $\operatorname{supp} f $ is connected, and let $c:=\sup \operatorname{supp} f$. Obviously $c\ge b$ and $\operatorname{supp} f \cup [0,u]=[0,c]$. Moreover we have $ ((C\cup uB_1^2) + he_2) \cup [0,c]\times [0,h] \subset (C\cup uB_1^2)+hT$, so 
	\begin{multline*}
		\nu\big((C\cup uB_1^2) +hT\big) \ge \nu\big( ((C\cup uB_1^2) + he_2) \cup [0,c]\times [0,h]\big) 
		\ge \nu\big( (C\cup uB_1^2) + he_2)+ \nu(  [0,b]\times [0,h] \big)
		\\
		= \nu(bB_1^n+he_2) +  \nu(  [0,b]\times [0,h] \big)  \ge \nu\big(  (b+h)B_1^n\big) -Lh^2,
	\end{multline*}
	where $L$ is an absolute constant. The proof of the theorem is finished.
\end{proof}

{\bf Acknowledgements.} 
 I would like to thank Piotr Nayar and Tomasz Tkocz for introducing me to the topic of isoperimetric problems and posing an  inspiring question about examples of  extremal sets for the geometric measure on $\mathbb{Z}_+^2$.

\end{document}